\title{A Permanental Analog of the Rank--Nullity Theorem for Symmetric Matrices}
\DeclareMathOperator{\per}{per}
\DeclareMathOperator{\rank}{rank}
\author{
Priyanshu Pant\\
Department of CSE, IIT Indore, India\\
\texttt{priyanshupant03@gmail.com}
\and
Surabhi Chakrabartty\\
Department of Mathematical Sciences, IISER Berhampur, India\\
\texttt{surabhic20@iiserbpr.ac.in}
\and
Ranveer Singh\\
Department of CSE, IIT Indore, India\\
\texttt{ranveer@iiti.ac.in}
}
\date{} % or \date{December 2025}
\theoremstyle{plain}
\newtheorem{theorem}{Theorem}[section]   % numbered within sections
\newtheorem{lemma}[theorem]{Lemma}
\theoremstyle{definition}
\newtheorem{example}[theorem]{Example}
\theoremstyle{remark}
\begin{document}
\maketitle
\begin{abstract}
The rank of an $n \times n$ matrix $A$ is equal to the size of its largest square submatrix with a nonzero determinant; it can be computed in $O(n^{2.37})$ time. Analogously, the size of the largest square submatrix with nonzero permanent is defined as the permanental rank $\rho_{\mathrm{per}}(A)$. Computing the permanent or coefficients of the permanental polynomial $\per(xI-A)$ is \#P-complete.
The permanental nullity  $\eta_{\mathrm{per}}(A)$ is defined as the multiplicity of zero as a root of the permanental polynomial.
We establish a permanental analog of the rank–nullity theorem, $\rho_{\mathrm{per}}(A) + \eta_{\mathrm{per}}(A) = n$ for symmetric nonnegative matrices, positive semidefinite matrices, and adjacency matrices of balanced signed graphs. Using this theorem, we can compute the permanental nullity for symmetric nonnegative matrices and adjacency matrices of balanced signed graphs in polynomial time.
For symmetric $\{0,\pm 1\}$-matrices, we also 
provide a complete characterization of when the permanental rank-nullity identity holds. 
\end{abstract}

\section{Introduction}

The \textit{determinant} of an $n \times n$ matrix $A=(a_{ij})$ is defined as 
\begin{equation}
    \det (A) = \sum_{\sigma \in S_{n}} \operatorname{sgn}(\sigma) \prod_{i=1}^{n} a_{i,\sigma(i)}, \notag
\end{equation}
where $S_n$ is the set of all permutations of $\{1, 2, \ldots, n\}$ and $\operatorname{sgn}$ is the sign of the permutation $\sigma$. The \textit{permanent} is defined in a similar way, but without the $\operatorname{sgn}$ factor:
\begin{equation}
    \per(A) = \sum_{\sigma \in S_{n}} \prod_{i=1}^{n} a_{i,\sigma(i)}. \notag
\end{equation}

Although the definition differs only by a sign factor, the computational complexity of the determinant and permanent is believed to vary significantly. The determinant can be computed in time $\mathcal{O}(n^{2.37})$ \cite{kaltofen2005complexity}, while finding the permanent is known to be $\#P$-complete \cite{valiant1979complexity}. The permanent lacks a well-established algebraic or geometric interpretation, and it is neither multiplicative nor invariant under linear combinations of rows or columns. As a result, the permanent has received less attention in the early literature, with nearly all known results at the time compiled in the book~\cite{minc1984permanents}. In 1979, Valiant \cite{valiant1979complexity} proved that computing the permanent is $\#P$-complete, thus making the permanent a central object of study in computational complexity theory.

The Pólya Permanent Problem \cite{polya1913aufgabe} investigates conditions under which the permanent of a given matrix can be transformed into the determinant of a modified matrix. This problem is equivalent to twenty-three other combinatorial and graph-theoretic problems \cite{mccuaig2004polya}, including the enumeration of perfect matchings in bipartite graphs. This motivates the search for algebraic frameworks in which the permanent satisfies identities structurally similar to those of the determinant.

Unlike the determinant, the permanent generally lacks multiplicativity, that is, 
\(
\operatorname{per}(AB) \neq \operatorname{per}(A) \operatorname{per}(B)
\).
Marcus and Minc~\cite{marcus1965permanents} conjectured that multiplicativity holds only when both matrices are products of permutation and diagonal matrices. Beasley~\cite{beasley1969maximal} later proved this, showing it is the maximal class for which the permanent is multiplicative. Beyond multiplicativity, several determinant inequalities have been adapted for the permanent. Lieb~\cite{lieb2002proofs} established a reversed version of the classical Fischer inequality for permanents. Specifically, for a block matrix
\(
A = \begin{pmatrix}
B & C \\
C^* & D
\end{pmatrix} \succeq 0,
\)
where \( A \) is positive semidefinite, the inequality 
\(
\operatorname{per}(A) \geq \operatorname{per}(B) \operatorname{per}(D)
\) holds.
This stands in contrast to the determinant case, where the inequality is reversed, that is, \( \det(A) \leq \det(B) \det(D) \) under the same conditions. Carlen, Lieb, and Loss also proved a Hadamard-type bound, relating the permanent to the product of the row norms~\cite{carlen2006inequality}.
Heuvers, Cummings, and Bhaskara Rao~\cite{heuvers1988characterization} showed that the permanent satisfies an identity analogous to the Cauchy-Binet formula for the determinant.

Such analogs between determinants and permanents motivate a question: which matrix properties defined in terms of the permanent remain tractable despite the computational hardness of the permanent?

A fundamental matrix property is its \textit{rank}. The classical $\operatorname{rank}(A)$, defined as the size of the largest square submatrix with a non-zero determinant, is efficiently computable. 
A natural analog is the \textit{permanental rank} $\rho_{\mathrm{per}}(A)$, defined as the size of the largest square submatrix with a non-zero permanent. Yang Yu~\cite{yu1999permanent} introduced this concept and established the fundamental inequality $\rank(A) \leq 2 \rho_{\per}(A)$, which is known to be tight. This work connected permanental rank to combinatorial matrix theory and inspired further research, including connections to the Alon--Jaeger--Tarsi conjecture~\cite{alon1989nowhere} and partial transversals in Latin squares~\cite{fanai2012permanent}.

Analogous to the characteristic polynomial, the \textit{permanental polynomial} of an
$n \times n$ matrix \(A\) is defined as
\begin{equation}
    \pi(A, x) = \per(xI-A), \notag
\end{equation}
where \(I\) is the identity matrix of order $n$. Given a graph $G$ with adjacency matrix $A(G)$, its permanental polynomial is $\pi(G, x) = \per(xI-A(G))$. The multiset of all roots of $\pi(G, x)$, including multiplicities, is called the \textit{per-spectrum} of $G$.
Turner \cite{turner1968generalized} first introduced permanental polynomials in graph theory, and Merris et al. \cite{merris1981permanental} and Kasum et al. \cite{kasum1981chemical} later expanded this work to explore their mathematical properties and uses in chemistry.
For more results on permanental polynomials and per-spectrum, see~\cite{bapat2024computing, cash2000permanental,gutman1998permanents, liu2013characterizing,   singh2024note,  zhang2015per}.

The \textit{permanental nullity}, \( \eta_{\text{per}}(G) \) defined as the multiplicity of zero as a root in \( \pi(G, x) \) was first studied by Wu and Zhang \cite{wu2015per}. They established its connection to the matching number via the Gallai--Edmonds structure theorem. They provided exact characterizations for graphs with extremal per-nullities \( n - 2, n - 3, n - 4, n - 5 \), and derived a sharp formula for general graphs involving factor-critical components. Their work also characterized graphs with zero permanental nullity and analyzed their behavior on unicyclic graphs, line graphs, and factor-critical graphs. Unlike the determinant case, permanents do not admit an eigenvalue–eigenvector
theory, so \(\eta_{\mathrm{per}}(A)\) is an algebraic notion and does not coincide
with the geometric nullity.

For a symmetric matrix, it is known that the rank equals the number of nonzero eigenvalues, while the nullity corresponds to the number of zero eigenvalues. In this paper, we demonstrate that a similar relationship exists for the permanental roots of a symmetric matrix. We show that the identity $\rho_{\mathrm{per}}(A) + \eta_{\mathrm{per}}(A) = n$ holds for several matrix classes, including nonnegative symmetric matrices, positive semidefinite matrices, and symmetric matrices corresponding to balanced signed graphs. We also provide a necessary and sufficient condition under which the identity $\rho_{\mathrm{per}}(A) + \eta_{\mathrm{per}}(A) = n$ holds for  $\{0,\pm1\}$-symmetric matrices. Our work demonstrates that despite the \#P-hardness of permanent computation, permanental rank and nullity exhibit surprising algorithmic tractability for some classes of matrices.

The remainder of the paper is structured as follows. Section \ref{perliminaties} introduces the necessary definitions, notations, and some preliminary results. Section \ref{mainresults} presents our main theorems, establishing the permanental rank-nullity relationship for a few classes of symmetric matrices, and discusses counterexamples where this relation fails. Finally, Section \ref{futuredirections} outlines directions for future work.

\section{Preliminaries}\label{perliminaties}

% In this section, we recall some known definitions and results that will be used in the proofs of our main theorems.
 
Signed graphs were introduced by Harary~\cite{harary1953notion} and later formalized by Zaslavsky~\cite{zaslavsky1982signed}. A signed graph $G_\sigma$ is an undirected graph where each edge $(u, v)$ has a sign $\sigma(u, v)$ which is either positive (+1) or negative (-1). The corresponding \emph{signed adjacency matrix} \( A_\sigma \in \{0,\pm 1\}^{n\times n}
 \) is defined by
\[
A_\sigma(u,v) = 
\begin{cases}
\sigma(u,v) & \text{if } (u,v) \in E(G), \\
0 & \text{otherwise}.
\end{cases}
\]

In a signed graph, a cycle is called \emph{positive} if it contains an even number of negative edges, and \emph{negative} if it contains an odd number of negative edges. A signed graph $G_\sigma$ is called \emph{balanced} if every cycle is positive. The following result provides a simple criterion to determine whether a signed graph is balanced.

\begin{lemma} [\cite{zaslavsky2013matrices}] \label{prop:harary-matrix}
A signed graph \(G_\sigma\) is balanced if and only if its signed adjacency matrix \(A_\sigma\) is diagonally similar to the adjacency matrix \(A\) of the corresponding unsigned graph, that is,
\(
A_\sigma = D A D,
\)
for some diagonal matrix \(D = \mathrm{diag}(\pm1,\ldots,\pm1)\).
\end{lemma}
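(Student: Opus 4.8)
The plan is to prove both directions separately, treating the matrix identity $A_\sigma = DAD$ as an equivalent statement about a vertex labeling. Writing $D=\mathrm{diag}(d_1,\dots,d_n)$ with $d_i\in\{\pm1\}$, the $(u,v)$ entry of $DAD$ is $d_u A_{uv} d_v$, which equals $d_u d_v$ exactly when $(u,v)\in E(G)$ and $0$ otherwise. Hence $A_\sigma=DAD$ holds if and only if there is a map $d\colon V(G)\to\{\pm1\}$ with $\sigma(u,v)=d_u d_v$ on every edge $(u,v)$. I would make this reformulation first, since both directions become cleaner once phrased in terms of $d$.

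For the ``if'' direction I would take an arbitrary cycle $C=v_1 v_2\cdots v_k v_1$ and compute the product of its edge signs. Substituting $\sigma(v_i,v_{i+1})=d_{v_i}d_{v_{i+1}}$, every vertex of $C$ appears in exactly two of its edges, so the product telescopes to $\prod_{i} d_{v_i}^2=1$. Thus every cycle has an even number of negative edges and is positive, so $G_\sigma$ is balanced. This direction is immediate and uses no hypothesis beyond the matrix form.

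The substantive direction is ``only if,'' where $D$ must be constructed from balance. I would reduce to a connected graph, handling the components independently and assembling $D$ block-diagonally, then fix a spanning tree $T$ rooted at a vertex $r$ with $d_r=+1$. For each vertex $v$ I would define $d_v$ to be the product of the edge signs along the unique $r$-to-$v$ path in $T$. By construction this forces $\sigma(u,v)=d_u d_v$ on every tree edge automatically, so the only thing left to check is the chords.

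The key step, and the only place where balance enters, is verifying the identity on the non-tree edges. For a chord $(u,v)$, the tree path $P$ from $u$ to $v$ together with $(u,v)$ forms the fundamental cycle of $(u,v)$; a short telescoping computation through the lowest common ancestor of $u$ and $v$ gives $\prod_{e\in P}\sigma(e)=d_u d_v$. Balance forces the sign product around the fundamental cycle to be $+1$, that is, $\sigma(u,v)\cdot d_u d_v=1$, and since $d_u d_v=\pm1$ this yields $\sigma(u,v)=d_u d_v$. The labeling is therefore consistent on all edges, giving $A_\sigma=DAD$. I expect this consistency check to be the main obstacle: the spanning-tree construction pins down $d$ uniquely, but without balance there is no reason the forced values should agree on the chords, and the fundamental-cycle argument is exactly what converts the global balance hypothesis into this local agreement.
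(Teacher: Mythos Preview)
Your argument is correct and is in fact the standard proof of this classical result (often attributed to Harary and Zaslavsky): the reformulation of $A_\sigma=DAD$ as a vertex labeling $d\colon V\to\{\pm1\}$ with $\sigma(u,v)=d_ud_v$, the telescoping check that any such labeling makes every cycle positive, and the spanning-tree construction of $d$ with the fundamental-cycle verification on chords are all sound. One minor remark: you invoke balance only on fundamental cycles, which is enough here since balance is assumed on \emph{all} cycles; but it is worth being aware that positivity of all fundamental cycles with respect to a fixed spanning tree already implies positivity of every cycle, so your construction in fact uses the hypothesis in its minimal form.

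As for comparison with the paper: there is nothing to compare. The paper does not prove Lemma~\ref{prop:harary-matrix} at all; it simply quotes it from~\cite{zaslavsky2013matrices} as a known structural fact about balanced signed graphs and then applies it in the proof of Theorem~\ref{thm:balanced-prn}. Your write-up would serve perfectly well as a self-contained proof if one were desired.
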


For a \( n\times n \) matrix \( A \) and index sets \( I,J \subseteq [n] \), we use \( A[I , J] \) to denote the submatrix of \( A \) formed by rows indexed by \( I \) and columns indexed by \( J \).  
Let \( S \subseteq [n] \) be an index set of size \( k \). We write \( A[S,S] \) to denote the \( k \times k \) principal submatrix of \( A \) formed by selecting the rows and columns indexed by \( S \).
We recall a few classical results on positive semidefinite matrices related to their permanents.

\begin{lemma}[\cite{Marcus1963ThePA}]\label{theorem:psd-properties}

Let \(A \in \mathbb{R}^{n \times n}\) be a positive semidefinite symmetric matrix.
Then:
\begin{enumerate}
  \item \(\per(A) \ge 0\).
  \item \(a_{ii}a_{jj} \ge a_{ij}^2\).
  \item \(\per(A) \ge \prod_{i=1}^n a_{ii}\).
  \item Every principal submatrix of \(A\) is positive semidefinite; hence
  \(\per(A[I,I]) \ge 0\) for all \(I \subseteq [n]\).
\end{enumerate}
\end{lemma}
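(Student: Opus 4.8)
The plan is to exploit the Gram factorization of a positive semi-definite matrix: since $A \succeq 0$ is real symmetric, we may write $A = U^{T} U$ for a real $r \times n$ matrix $U$ (for instance $U = A^{1/2}$ with $r = n$), so that $a_{ij} = \langle u_i, u_j\rangle$ where $u_1,\dots,u_n$ are the columns of $U$. I would establish the four assertions in the order (4), (2), (1), (3), since the later ones lean on the earlier ones. Parts (2) and (4) are elementary; part (1) is the substantive nonnegativity statement and rests on the permanental Cauchy--Binet identity of Heuvers, Cummings, and Bhaskara Rao~\cite{heuvers1988characterization}; and part (3) follows by induction from Lieb's reversed Fischer inequality~\cite{lieb2002proofs}.

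For (4), given $I \subseteq [n]$ and any vector $y$ supported on the coordinates in $I$, we have $y^{T} A y \ge 0$; restricting to such $y$ shows $y^{T} A[I,I]\, y \ge 0$ for every $y$, so $A[I,I]$ is again positive semi-definite. Granting (1), this immediately yields $\per(A[I,I]) \ge 0$. For (2), apply (4) to the two-element set $I = \{i,j\}$: the principal submatrix $\begin{pmatrix} a_{ii} & a_{ij} \\ a_{ij} & a_{jj}\end{pmatrix}$ is positive semi-definite, hence has nonnegative determinant, giving $a_{ii} a_{jj} - a_{ij}^2 \ge 0$. (Equivalently this is Cauchy--Schwarz, $\langle u_i, u_j\rangle^2 \le \|u_i\|^2\|u_j\|^2$.)

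The core is (1). Writing $A = U^{T} U$ and applying the permanental Cauchy--Binet formula to the product of the $n \times r$ matrix $U^{T}$ and the $r \times n$ matrix $U$, I would obtain
\[
\per(A) \;=\; \sum_{\alpha} \frac{1}{\mu(\alpha)}\, \per\!\big(U^{T}[\,\cdot\,,\alpha]\big)\,\per\!\big(U[\alpha,\,\cdot\,]\big),
\]
where $\alpha$ runs over nondecreasing length-$n$ sequences drawn from $\{1,\dots,r\}$ and $\mu(\alpha)$ is the product of the factorials of the multiplicities occurring in $\alpha$. Since the permanent is invariant under transposition, $U^{T}[\,\cdot\,,\alpha] = (U[\alpha,\,\cdot\,])^{T}$ forces $\per(U^{T}[\,\cdot\,,\alpha]) = \per(U[\alpha,\,\cdot\,])$, so every summand equals $\per(U[\alpha,\,\cdot\,])^2/\mu(\alpha) \ge 0$; hence $\per(A) \ge 0$. (Over $\mathbb{C}$ one replaces $T$ by $*$ and each summand becomes $|\per(U[\alpha,\,\cdot\,])|^2/\mu(\alpha)$, using $\per(M^{*}) = \overline{\per(M)}$.)

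Finally, for (3) I would induct on $n$. The base case $n = 1$ is the identity $\per(A) = a_{11}$. For the inductive step, view $A$ as the block matrix with $B = (a_{11})$ and $D = A[\{2,\dots,n\},\{2,\dots,n\}]$; Lieb's reversed Fischer inequality gives $\per(A) \ge \per(B)\,\per(D) = a_{11}\,\per(D)$. By (4) the block $D$ is positive semi-definite, so the inductive hypothesis yields $\per(D) \ge \prod_{i=2}^{n} a_{ii}$, and multiplying through gives $\per(A) \ge \prod_{i=1}^{n} a_{ii}$. I expect the main obstacle to be part (1): everything else is either a restriction-of-the-quadratic-form argument or a one-line induction once Lieb's inequality is in hand, whereas the nonnegativity of the permanent genuinely requires the combinatorial Cauchy--Binet expansion together with the observation that its terms organize into a sum of squares.
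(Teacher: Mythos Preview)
Your proof is correct in every part. Note, however, that the paper does not actually prove this lemma: it is quoted verbatim from~\cite{paksoy2023permanent} and only used as a black box in Section~3.5, so there is no in-paper argument to compare against. Your sequencing (4)$\to$(2)$\to$(1)$\to$(3) is sound; the restriction-of-quadratic-form argument for (4) and the $2\times2$ determinant for (2) are the standard proofs, the permanental Cauchy--Binet expansion for (1) is exactly the classical route to nonnegativity (each summand is indeed a square over a positive integer since $\per$ is transpose-invariant), and the induction via Lieb's reversed Fischer inequality for (3) is also standard and works because $a_{11}\ge 0$ lets you chain the inequalities.
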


We now state some results about the coefficients of the permanental polynomial of a matrix.

\begin{lemma}[\cite{minc1984permanents}]\label{lemma:coeff-principal} 
Let \(A \in \mathbb{R}^{n \times n}\) be a matrix
with permanental polynomial
\[
\pi(A, x) =\per(xI-A)= \sum_{i=0}^n b_i x^{n-i}.
\]
Then \( b_0 = 1 \), and for \( 1 \leq i \leq n \),
\[
b_i = (-1)^i \sum_{|S| = i} \operatorname{per}(A[S, S]),
\]
where the sum is over all principal \( i \times i \) submatrices of \( A \).
\end{lemma}

Interestingly, for the adjacency matrix of a signed graph $G_{\sigma}$, the coefficients $b_i$ can be expressed in terms of Sachs subgraphs. A \emph{Sachs subgraph} is a subgraph in which every connected component is either
an edge or a cycle (a loop counts as a $1$-cycle when diagonal entries are allowed). For signed graphs, the signed adjacency matrix has zero diagonal, so loops do not arise. The coefficients of the permanental polynomial are therefore given by sums over Sachs subgraphs, as shown below.

\begin{lemma}[\cite{tang2022permanental}]\label{lemma:sachs-signed}
Let \( A_\sigma \) be the signed adjacency matrix of a signed graph \( G_\sigma \) on \( n \) vertices, and let
\[
\pi(A_\sigma,x) = \sum_{i=0}^n s_i x^{n-i}
\]
be its permanental polynomial. Then for \( 0 \leq i \leq n \),
\[
s_i = (-1)^i \sum_{U_i} (-1)^{c^-(U_i)} 2^{c(U_i)},
\]
where the sum is over all signed Sachs subgraphs \( U_i \) of \( G_\sigma \) on \( i \) vertices, \( c(U_i) \) is the number of cycles in \( U_i \), and \( c^-(U_i) \) is the number of negative cycles in \( U_i \).
\end{lemma}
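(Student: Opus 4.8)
The plan is to derive the formula directly from Lemma~\ref{lemma:1}, which already expresses each coefficient as $s_i = (-1)^i \sum_{|S|=i} \per(A_\sigma[S,S])$. The whole task therefore reduces to giving a combinatorial evaluation of the permanent of an arbitrary principal submatrix $A_\sigma[S,S]$ and then checking that, after summing over $S$, the principal-submatrix sum reorganizes into a sum over Sachs subgraphs on $i$ vertices.

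First I would expand $\per(A_\sigma[S,S]) = \sum_{\tau} \prod_{v \in S} (A_\sigma)_{v,\tau(v)}$, the sum being over all permutations $\tau$ of $S$, and record two structural facts. Since $G_\sigma$ is a simple signed graph its adjacency matrix has zero diagonal, so any $\tau$ with a fixed point contributes $0$; hence only fixed-point-free $\tau$ survive. Writing such a $\tau$ as a product of disjoint cycles (each of length at least $2$) and reading off the edges $\{v,\tau(v)\}$ shows that the support of a nonvanishing term is exactly a spanning Sachs subgraph $U$ of the induced signed graph $G_\sigma[S]$: the transpositions of $\tau$ give the edge-components of $U$, and the cycles of length $\ge 3$ give the cycle-components. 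I would then count, for a fixed Sachs subgraph $U$, how many permutations $\tau$ produce it: an edge-component is realized by the unique transposition, while a cycle-component of length $\ge 3$ can be traversed in either of its two orientations, so $U$ arises from exactly $2^{c(U)}$ permutations, where $c(U)$ is the number of cycle-components.

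The sign bookkeeping is the step needing the most care, and it is where the distinction between $c^-(U)$ and the raw number of negative edges becomes essential. Because the permanent carries no $\operatorname{sgn}(\tau)$ factor, the weight $\prod_v (A_\sigma)_{v,\tau(v)}$ of each surviving $\tau$ is simply the product of the edge signs it uses. Each edge-component $\{i,j\}$ contributes $\sigma(i,j)^2 = 1$, so edges are sign-neutral; each cycle-component contributes the product of the signs around it, which collapses by parity to $(-1)$ raised to the number of negative edges on that cycle, i.e. $+1$ for a positive cycle and $-1$ for a negative cycle. Multiplying over components, every fixed-point-free $\tau$ supported on $U$ has the same weight $(-1)^{c^-(U)}$. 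Combining this with the orientation count yields $\per(A_\sigma[S,S]) = \sum_{U} 2^{c(U)} (-1)^{c^-(U)}$, the sum being over spanning Sachs subgraphs of $G_\sigma[S]$.

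Finally I would substitute this into Lemma~\ref{lemma:1}. As $S$ ranges over all $i$-subsets of $[n]$ and $U$ over spanning Sachs subgraphs of $G_\sigma[S]$, the pair $(S,U)$ ranges precisely once over all Sachs subgraphs $U_i$ of $G_\sigma$ occupying exactly $i$ vertices, the vertex set recovering $S$. Hence $s_i = (-1)^i \sum_{U_i} 2^{c(U_i)} (-1)^{c^-(U_i)}$, which is the claimed identity, and the case $i=0$ gives the empty subgraph with $s_0 = 1$. The only genuine obstacle is the sign collapse within a cycle, namely verifying that the product of edge signs around a cycle depends only on the cycle's parity and not on the chosen orientation or starting vertex; this follows from the symmetry $(A_\sigma)_{uv} = (A_\sigma)_{vu}$, which guarantees that the two orientations of a cycle carry equal weight.
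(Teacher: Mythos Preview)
The paper does not prove this lemma; it is stated in the preliminaries with a citation to \cite{tang2022permanental} and used as a black box. Your proof is correct and is the standard derivation: reduce to Lemma~\ref{lemma:1}, expand the permanent of each principal submatrix over fixed-point-free permutations, identify the support of a nonvanishing term with a spanning Sachs subgraph, count the $2^{c(U)}$ orientations, and collapse the edge-sign product to $(-1)^{c^{-}(U)}$ using the symmetry of $A_\sigma$.
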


\section{Main Results} \label{mainresults}

In this section, we first establish a general inequality for the permanental rank and permanental nullity that holds for all square matrices. We then provide a necessary and sufficient condition under which the permanental rank–nullity identity holds for $\{0,\pm1\}$-symmetric matrices. Using this condition, we show that the permanental rank–nullity identity holds for nonnegative symmetric matrices and adjacency matrices of balanced signed graphs. We also prove that the identity holds for positive semidefinite matrices using a different argument based on their structure.

\subsection{General Inequality}

\begin{theorem}\label{prop:general-inequality}
For any square matrix \( A \in \mathbb{R}^{n \times n} \),  
\[
\rho_{\operatorname{per}}(A) + \eta_{\operatorname{per}}(A) \geq n.
\]
\end{theorem}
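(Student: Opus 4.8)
The plan is to read off the permanental nullity directly from the coefficients of the permanental polynomial and then use a single nonvanishing coefficient to certify a principal submatrix with nonzero permanent. Writing $\pi(A,x) = \sum_{i=0}^{n} b_i x^{n-i}$ as in Lemma~\ref{lemma:1}, I would first observe that $\pi(A,x)$ is monic of degree $n$ since $b_0 = 1$; in particular it is not the zero polynomial, so the multiplicity of $0$ as a root is well defined and lies in $\{0,1,\dots,n\}$. Re-indexing the polynomial by powers of $x$, the coefficient of $x^{j}$ equals $b_{n-j}$, so the multiplicity $m := \eta_{\operatorname{per}}(A)$ of the root $0$ is exactly the smallest index $j$ for which $b_{n-j} \neq 0$. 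Equivalently, $b_n = b_{n-1} = \cdots = b_{n-m+1} = 0$ while $b_{n-m} \neq 0$.

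Next I would use Lemma~\ref{lemma:1} to interpret the nonvanishing of $b_{n-m}$ combinatorially. Assuming first that $m < n$, we have
\[
b_{n-m} = (-1)^{n-m} \sum_{|S| = n-m} \operatorname{per}(A[S,S]) \neq 0,
\]
so the sum over principal $(n-m)\times(n-m)$ submatrices cannot vanish. Consequently there exists at least one index set $S \subseteq [n]$ with $|S| = n-m$ and $\operatorname{per}(A[S,S]) \neq 0$. Since a principal submatrix is in particular a square submatrix, this witnesses a size-$(n-m)$ square submatrix with nonzero permanent, and therefore $\rho_{\operatorname{per}}(A) \geq n - m$.

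Rearranging gives $\rho_{\operatorname{per}}(A) + \eta_{\operatorname{per}}(A) \geq (n-m) + m = n$, which is the claim. The remaining case $m = n$ corresponds to $\pi(A,x) = x^n$, i.e.\ $b_i = 0$ for all $1 \le i \le n$; here the inequality reads $\rho_{\operatorname{per}}(A) + n \geq n$ and holds trivially since $\rho_{\operatorname{per}}(A) \geq 0$.

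I do not expect a serious obstacle here, as this is the \emph{easy} direction of the eventual rank--nullity identity. The only point requiring care is the bookkeeping that identifies the root multiplicity with the least index of a nonzero coefficient, together with the observation that it suffices to exhibit a \emph{principal} submatrix of the right size with nonzero permanent---the permanental rank, defined via arbitrary square submatrices, can only be larger. This asymmetry (principal submatrices suffice for the lower bound, but arbitrary submatrices may be needed for a matching upper bound) is precisely what will make the reverse inequality, and hence equality, delicate in the later theorems.
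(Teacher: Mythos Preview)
Your proof is correct and uses essentially the same idea as the paper: Lemma~\ref{lemma:1} ties the coefficients $b_i$ of $\pi(A,x)$ to sums of principal-minor permanents, so a nonzero $b_{n-m}$ forces a principal submatrix of order $n-m$ with nonzero permanent. The only cosmetic difference is the direction of the argument---the paper fixes $k=\rho_{\operatorname{per}}(A)$ and deduces $b_i=0$ for $i>k$, hence $x^{n-k}\mid\pi(A,x)$ and $\eta_{\operatorname{per}}(A)\ge n-k$, whereas you fix $m=\eta_{\operatorname{per}}(A)$ and read off $\rho_{\operatorname{per}}(A)\ge n-m$ from the lowest nonzero coefficient; these are the same inference run in opposite directions.
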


\begin{proof}
Let the permanental polynomial of \( A \) be
\[
\pi(A,x) = \sum_{i=0}^n b_i x^{n - i}.
\]
By Lemma~\ref{lemma:coeff-principal}, for each \( 1 \leq i \leq n \),
\[
b_i = (-1)^i \sum_{|S| = i} \operatorname{per}(A[S, S]),
\]
where the sum is over all principal \( i \times i \) submatrices of \( A \).
Assume that  \(\rho_{\per}(A) =k \). Then every principal submatrix of order greater than \(k\) has zero permanent. So, for all $i > k$, the coefficients
\[
b_i = (-1)^i \sum_{|S| = i} \operatorname{per}(A[S,S]) = 0.
\]
The permanental polynomial simplifies to
\[
\begin{aligned}
\pi(A, x) &= \sum_{i = 0}^{k} b_i x^{n - i} \\
          &=  b_0 x^n + b_1 x^{n-1} + \dots + b_k x^{n - k} \\
          &= x^{n - k}(b_0 x^k + b_1 x^{k - 1} + \dots + b_k).
\end{aligned}
\]
Since \( x^{n-k} \) is a factor, the polynomial must have at least \( n - k \) roots equal to zero. So, by the definition of permanental nullity, we have
\(
\eta_{\per}(A) \geq n - k.
\)

Therefore, 
\(
\eta_{\per}(A) + \rho_{\per}(A) \geq n.
\)\end{proof}

 While Theorem~\ref{prop:general-inequality} holds universally, computing either \(\rho_{\operatorname{per}}(A)\) or \(\eta_{\operatorname{per}}(A)\) directly appears challenging due to the \#P-hardness of permanent computation. However, as we will show, for some matrix classes, both parameters become tractable, and the inequality becomes an equality.
The inequality in Theorem~\ref{prop:general-inequality} can be strict without further assumptions, as shown in the example below.
\begin{example} \label{example_gen}
Let 
\(
A = \begin{pmatrix}
  0 &  1 \\
  0 &  0
\end{pmatrix}.
\)
Then,
\(
\rho_{\per}(A) = 1 \), \( \eta_{\per}(A) = 2 \), so
\(
\rho_{\per}(A) + \eta_{\per}(A) = 3 > n = 2.
\)

\end{example}

Given a matrix \(A=[a_{ij}]_{n\times n}\), let \(\vec G\) denote the directed graph
on vertex set \([n]\) with an arc \(i\to j\) whenever \(a_{ij}\neq 0\).
A \emph{directed cycle cover} of a vertex set \(S\subseteq[n]\) is a vertex-disjoint union of directed cycles covering all vertices of \(S\).
For any directed cycle cover \(C\), define its weight by
\[
w(C):=\prod_{(i\to j)\in C} a_{ij}.
\]

The permanent of \(A\) can be written as~(see \cite{BrualdiCvetkovic2008}),
\begin{equation}\label{eq:per-cycle-covers}
\per(A)=\sum_{C\in\mathcal L} w(C),
\end{equation}
where \(\mathcal L\) denotes the set of directed cycle covers of \(\vec G\).
Combining Lemma~\ref{lemma:coeff-principal} with~\eqref{eq:per-cycle-covers}, for
\(1\le i\le n\) we obtain
\begin{equation}\label{eq:bi-cycle-covers}
b_i
= (-1)^i \sum_{\substack{S\subseteq[n]\\ |S|=i}}
\ \sum_{C\in\mathcal L(S)} w(C),
\end{equation}
where \(\mathcal L(S)\) denotes the set of directed cycle covers of the subgraph of
\(\vec G\) induced on the vertex set \(S\).
When \(A=A_\sigma\) is the signed adjacency matrix of a signed graph \(G_\sigma\),
we have \(b_i=s_i\), and by Lemma~\ref{lemma:sachs-signed},
\begin{equation}\label{eq:bi-sachs}
b_i = (-1)^i \sum_{U_i} (-1)^{c^-(U_i)} 2^{c(U_i)},
\end{equation}
where the sum ranges over all signed Sachs subgraphs \(U_i\) of \(G_\sigma\) on \(i\)
vertices.

\subsection{A General Criterion for Symmetric \texorpdfstring{$\{0,\pm1\}$}{\{0,±1\}}-Matrices}

Let \(A=[a_{ij}]\in\{0,\pm1\}^{n\times n}\) be symmetric, and let \(\vec G\) be the
directed graph associated with \(A\) as defined above.
For \(S\subseteq[n]\), let \(\mathcal L(S)\) denote the set of directed cycle covers
of the subgraph of \(\vec G\) induced on \(S\), allowing loops \(i\to i\) whenever
\(a_{ii}\neq 0\).
For every \(C\in\mathcal L(S)\), we have \(w(C)\in\{\pm1\}\).
Define
\[
\begin{aligned}
E_i &:= (-1)^i \sum_{\substack{S\subseteq[n]\\ |S|=i}}
\bigl|\{C\in\mathcal L(S): w(C)=+1\}\bigr|,\\
O_i &:= (-1)^i \sum_{\substack{S\subseteq[n]\\ |S|=i}}
\bigl|\{C\in\mathcal L(S): w(C)=-1\}\bigr|.
\end{aligned}
\]

By~\eqref{eq:bi-cycle-covers}, for every \(i\),
\[
b_i = E_i - O_i.
\]

When \(A\) is the signed adjacency matrix of a signed graph, then by Lemma~\ref{lemma:sachs-signed} this identity admits an equivalent form with
\[
E_i = (-1)^{i} \sum_{\substack{U_i\ \\ c^-(U_i)\ \text{is even}}} 2^{c(U_i)},
\qquad
O_i = (-1)^{i} \sum_{\substack{U_i\ \\ c^-(U_i)\ \text{is odd}}} 2^{c(U_i)}.
\]

\begin{theorem}\label{thm:general-case}
Let \(A\in\{0,\pm1\}^{n\times n}\) be a symmetric matrix and let
\(\rho_{\operatorname{per}}(A)=k\). Then
\[
\rho_{\operatorname{per}}(A)+\eta_{\operatorname{per}}(A)=n
\quad\text{if and only if}\quad
E_k\neq O_k.
\]
\end{theorem}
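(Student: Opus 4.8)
The plan is to connect the permanental rank–nullity identity directly to the coefficients of the permanental polynomial, exploiting the decomposition $s_i = E_i - O_i$ already set up in the excerpt. From Theorem~\ref{prop:general-inequality} we know $\rho_{\per}(A) + \eta_{\per}(A) \ge n$ always holds, so the content is to show that equality is \emph{exactly} the non-vanishing of the degree-$k$ coefficient. Concretely, with $\rho_{\per}(A) = k$, Lemma~\ref{lemma:1} (or equivalently Lemma~\ref{lemma:sachs-signed}) gives
\[
\pi(A_\sigma, x) = \sum_{i=0}^{k} s_i \, x^{n-i} = x^{n-k}\bigl(s_0 x^k + s_1 x^{k-1} + \dots + s_k\bigr),
\]
since every principal submatrix of order $>k$ has zero permanent, forcing $s_i = 0$ for $i > k$. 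The permanental nullity $\eta_{\per}(A)$ is the multiplicity of $0$ as a root of $\pi(A_\sigma, x)$, so the question reduces to whether the bracketed polynomial $q(x) = s_0 x^k + \dots + s_k$ has $x = 0$ as a root, i.e.\ whether its constant term $s_k$ vanishes.

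First I would argue that $\eta_{\per}(A) = n - k$ precisely when $s_k \neq 0$. The factor $x^{n-k}$ contributes exactly $n-k$ zero roots. The bracketed factor $q(x)$ contributes an additional zero root if and only if $q(0) = s_k = 0$; and since $s_k$ is the coefficient of the lowest-degree term of $q$, the multiplicity of $0$ in $\pi$ exceeds $n-k$ exactly when $s_k = 0$. Hence $\eta_{\per}(A) = n - k$ (equivalently $\rho_{\per}(A) + \eta_{\per}(A) = n$) if and only if $s_k \neq 0$. The final step is purely the translation $s_k = E_k - O_k$, so $s_k \neq 0 \iff E_k \neq O_k$, which is immediate from the definitions of $E_k$ and $O_k$ recalled just above the theorem.

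The one point requiring a little care is the claim that $s_k \neq 0$ forces the multiplicity of the zero root to be \emph{exactly} $n-k$ rather than merely at least $n-k$. This rests on the observation that $q(0) = s_k$, so a nonzero $s_k$ means $q(x)$ is not divisible by $x$, and therefore $x^{n-k}$ is the exact power of $x$ dividing $\pi(A_\sigma, x)$. I would also note that $k = \rho_{\per}(A)$ guarantees $s_k \neq 0$ is at least \emph{possible}: there exists some principal $k \times k$ submatrix with nonzero permanent, but because the $s_k$ coefficient sums these permanents (with signs via the $E_k - O_k$ splitting), cancellation can occur, and it is exactly this cancellation that the condition $E_k = O_k$ detects. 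This is the conceptual heart of the statement: the permanental rank records the existence of \emph{some} nonzero-permanent submatrix of size $k$, whereas the coefficient $s_k$ records a \emph{signed sum} over all of them, and the two can disagree precisely when the even- and odd-negative-cycle Sachs contributions cancel.

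Thus the proof is short: establish the factorization, reduce nullity to the vanishing of $s_k$, and rewrite $s_k = E_k - O_k$. The main obstacle is not any deep computation but rather stating the multiplicity bookkeeping cleanly — ensuring that ``at least $n-k$ zeros from $x^{n-k}$'' plus ``zero or more additional zeros from $q$'' is correctly pinned down to an exact count governed solely by whether $s_k$ vanishes.
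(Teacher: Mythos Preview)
Your argument is correct and follows essentially the same route as the paper: factor $\pi(A,x) = x^{n-k}(s_0 x^k + \cdots + s_k)$ using $\rho_{\per}(A)=k$, observe that $\eta_{\per}(A)=n-k$ iff $s_k\neq 0$, and translate via $s_k=E_k-O_k$. Your extra care with the multiplicity bookkeeping and the conceptual remark about cancellation are welcome elaborations, but the mathematical content matches the paper's proof exactly.
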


\begin{proof}
Since \(\rho_{\operatorname{per}}(A)=k\), we have \(b_i=0\) for all \(i>k\).
Hence
\[
\pi(A,x)
=\sum_{i=0}^k b_i x^{n-i}
=x^{n-k}\bigl(b_0x^k+b_1x^{k-1}+\cdots+b_k\bigr).
\]
The permanental nullity \(\eta_{\operatorname{per}}(A)\) is the multiplicity of
\(0\) as a root of \(\pi(A,x)\).
Therefore,
\(
\eta_{\operatorname{per}}(A)=n-k
\quad\Longleftrightarrow\quad
b_k\neq 0
\quad\Longleftrightarrow\quad
E_k\neq O_k.
\)
\end{proof}
Theorem~\ref{thm:general-case} reduces the verification of the permanental rank-nullity identity to checking whether \(E_k \neq O_k\). While counting directed cycle covers and Sachs subgraphs is generally computationally hard, this characterization enables efficient algorithms for restricted graph classes where such enumeration is tractable.

\subsection{Nonnegative Symmetric Matrices}
We begin by giving an alternate graph-theoretic definition of the permanental rank,
which applies to all nonnegative matrices.

Let \( A \) be a nonnegative \( n \times n \) matrix.
Since every term in the expansion of \(\per(A)\) is nonnegative, replacing each
nonzero entry of \(A\) with \(1\) does not affect the permanental rank.

\begin{lemma}\label{lemma:perm-rank}
Let \(A\) be a nonnegative \(n\times n\) matrix.
Then \(\rho_{\operatorname{per}}(A)=k\) if and only if \(k\) is the maximum number of
arcs in a directed subgraph of \(\vec G\) in which each vertex has indegree and
outdegree at most \(1\).
\end{lemma}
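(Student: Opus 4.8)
The plan is to exploit the fact that, for a nonnegative matrix, there is no cancellation in the permanent expansion, so that $\per(A[I,J]) \neq 0$ reduces to the existence of a single nonzero term. Concretely, since every summand $\prod_{t} A(i_t, j_{\sigma(t)})$ in $\per(A[I,J])$ is a product of nonnegative entries, the whole sum is strictly positive if and only if at least one such product is strictly positive. Thus $\per(A[I,J]) > 0$ if and only if there is a bijection $\sigma$ between the row index set $I$ and the column index set $J$ with $A(i,\sigma(i)) \neq 0$ for every $i \in I$. This observation is the crux: it eliminates any role for cancellation and converts the analytic condition ``nonzero permanent'' into a purely combinatorial matching condition.

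Next I would set up a two-way correspondence between size-$k$ nonzero terms and size-$k$ arc sets in $\vec{G}$. Given a $k \times k$ submatrix $A[I,J]$ together with a bijection $\sigma : I \to J$ as above, I associate the arc set $M = \{\, i \to \sigma(i) : i \in I \,\}$. Because $|I| = k$ and $\sigma$ is a bijection, $M$ has exactly $k$ arcs; the tails run over the distinct elements of $I$, so every vertex has out-degree at most one, and the heads run over the distinct elements of $J$, so every vertex has in-degree at most one. Conversely, starting from any set $M$ of $k$ arcs in which each vertex has in-degree and out-degree at most one, I let $I$ be the set of tails and $J$ the set of heads occurring in $M$. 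The out-degree bound forces the $k$ tails to be distinct, giving $|I| = k$, and likewise $|J| = k$; sending each $i \in I$ to the head of its unique out-arc defines a bijection $\sigma : I \to J$ with $A(i,\sigma(i)) \neq 0$, whence $\per(A[I,J]) > 0$.

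Taking maxima on both sides then closes the argument: the largest $k$ for which some $k \times k$ submatrix has nonzero permanent equals the largest $k$ for which a valid $k$-arc set exists, which is exactly the quantity in the statement. I expect the only point demanding care to be the bookkeeping in this correspondence, namely checking that $\sigma$ is a genuine bijection (injectivity from the in-degree bound, surjectivity from the definition of $J$) and that $I$ and $J$ both have size \emph{exactly} $k$ rather than merely at most $k$. No deeper machinery is required; the result is in essence the identification of the permanental rank of a nonnegative matrix with its term rank, recast in the language of the digraph $\vec{G}$.
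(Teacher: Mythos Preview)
Your proposal is correct and follows essentially the same approach as the paper: both exploit nonnegativity to reduce $\per(A[I,J])\neq 0$ to the existence of a single bijection $\sigma:I\to J$ with all $A(i,\sigma(i))\neq 0$, then set up the same two-way correspondence between such triples $(I,J,\sigma)$ and arc sets with in/out-degree at most one, and finally equate maxima. Your remark that this identifies the permanental rank with the term rank is apt and is exactly the content of the paper's argument.
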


\begin{proof}
Let there exist index sets \(I,J\subseteq[n]\) with \(|I|=|J|=k\) such that
\(\per(A[I,J])\neq 0\).
Hence there is a bijection \(\pi:I\to J\) with \(a_{i,\pi(i)}\neq 0\) for all
\(i\in I\).
Let
\(
F=\{(i,\pi(i)) : i\in I\}.
\)
Then \(F\) is a set of \(k\) arcs in \(\vec G\) such that, in the subgraph of
\(\vec G\) induced by the arcs in \(F\), each vertex has indegree and outdegree at most \(1\).

Conversely, suppose \(F\) is a set of \(k\) arcs in \(\vec G\) such that, in the
subgraph of \(\vec G\) induced by the arcs in \(F\), each vertex has indegree and outdegree at
most \(1\).
Let \(I\) be the set of vertices with outdegree \(1\), and \(J\) the set of
vertices with indegree \(1\).
Then \(|I|=|J|=k\), since \(F\) has \(k\) arcs, and \(F\) defines a bijection \(\pi:I\to J\) with
\(a_{i,\pi(i)}\neq 0\).
Hence \(\per(A[I,J])\neq 0\), so \(\rho_{\operatorname{per}}(A)\ge k\).

The first part shows that any \(k\times k\) submatrix with nonzero permanent
gives a set of \(k\) arcs in \(\vec G\) in which each vertex has indegree
and outdegree at most \(1\), while the second part shows that any set of \(k\)
arcs in \(\vec G\) with this property implies there exists a \(k\times k\) submatrix with
nonzero permanent. Hence \(\rho_{\operatorname{per}}(A)\) is equal to the maximum
size of a subgraph of \(\vec G\) in which every vertex has indegree and outdegree
at most \(1\).
\end{proof}

We now consider nonnegative symmetric matrices.

\begin{lemma}\label{lemma:principal-existence}
Let \(A\) be a nonnegative symmetric matrix of order \(n\) with
\(\rho_{\operatorname{per}}(A)=k\). Then there exists a principal submatrix
\(A[S,S]\) of order \(k\) such that \(\per(A[S,S])\neq 0\).
\end{lemma}

\begin{proof}
Since \(\rho_{\per}(A)=k\), there exist index sets \(I,J\subseteq[n]\) with
\(|I|=|J|=k\) such that \(\per(A[I,J])\neq 0\).
By Lemma~\ref{lemma:perm-rank}, there exists a set \(F\) of \(k\) arcs in \(\vec G\)
such that, in the subgraph induced by the arcs in \(F\), each vertex of \(I\) has outdegree
exactly \(1\), each vertex of \(J\) has indegree exactly \(1\), vertices in
\(I\setminus J\) have indegree \(0\), and vertices in \(J\setminus I\) have
outdegree \(0\).

If $I=J$, the statement holds. Now, assume $I\neq J$. We will now prove that if an arc \(u\to v\in F\) with \(v\notin I\), then $u \in J$. Suppose for contradiction \(u\notin J\). Then \(u\in I\setminus J\) and \(v\in J\setminus I\).
Since the arc \(u\to v\in F\), we have \(a_{u,v}\neq 0\), and by symmetry
\(a_{v,u}\neq 0\). Consider the set of arcs
\(
F' := F \cup \{v\to u\}.
\)
Because \(u\) has indegree \(0\) in \(F\) and \(v\) has outdegree \(0\) in \(F\),
the set \(F'\) satisfies the condition that every vertex has indegree and
outdegree at most \(1\). But \(F'\) consists of \(k+1\) arcs, which contradicts
the assumption \(\rho_{\per}(A)=k\) by Lemma~\ref{lemma:perm-rank}. Therefore
\(u\in J\).

We now construct a directed cycle cover on the vertex set \(I\) from \(F\).
Let arc \(u\to v\in F\). If \(v\in I\), retain the arc \(u\to v\). If \(v\notin I\), then by the claim \(u\in J\) there exists a unique vertex
\(z\in I\) such that the arc \(z\to u\in F\), since \(u\in J\) has indegree exactly \(1\). By symmetry of \(A\), the arc \(u\to z\) exists.
Replace the arc \(u\to v\) by the arc \(u\to z\).
Moreover, no vertex of \(I\) receives more than one incoming arc; otherwise one can modify \(F\) to obtain a set of \(k+1\) arcs with indegree and outdegree at most \(1\), contradicting Lemma~\ref{lemma:perm-rank}. Doing this for all arcs of \(F\), we get \(k\) arcs on \(k\) vertices
of \(I\), and every vertex of \(I\) has outdegree \(1\).
It follows that each vertex of \(I\) has indegree \(1\) as well, and hence we
obtain a directed cycle cover on \(I\).

Thus there exists a permutation \(\sigma\) of \(I\) such that \(a_{i,\sigma(i)}\neq 0\) for all \(i\in I\).
Since \(A\) is nonnegative, this gives \(\per(A[I,I])\neq 0\), completing the proof.
\end{proof}

Given a nonnegative symmetric matrix \(A\), let \(G\) denote the associated
undirected graph on vertex set \([n]\), where \(\{i,j\}\in E(G)\) if
\(a_{ij}\neq 0\), with a loop at \(i\) whenever \(a_{ii}\neq 0\).
The following lemma reformulates the existence of a directed cycle cover in terms
of Sachs subgraphs of \(G\).

\begin{lemma}\label{perrank_sachsub}
Let \(A\) be a nonnegative symmetric matrix, and let \(G\) be the undirected graph
associated with \(A\) as above. If \(\rho_{\operatorname{per}}(A)=k\), then \(G\)
contains a Sachs subgraph on \(k\) vertices.
\end{lemma}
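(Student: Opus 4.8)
The plan is to combine the digraph characterization of permanental rank from Lemma~\ref{lemma:perm-rank} with the symmetry of $A$. Since $\rho_{\per}(A)=k$, that lemma supplies a \emph{maximum} arc set $F$ in $\vec{G}$ with $|F|=k$ in which every vertex has in-degree and out-degree at most one. Such an arc set is a partial permutation, so it decomposes into vertex-disjoint directed paths and directed cycles (a directed $2$-cycle $i\to j\to i$ being the symmetric pair of arcs; if $A$ has a nonzero diagonal, loops appear as fixed points, which I treat as length-one cycle components). I would translate each cyclic component directly into a Sachs component of the undirected graph $G$: a directed cycle $i_1\to\cdots\to i_\ell\to i_1$ becomes the undirected cycle on $\{i_1,\dots,i_\ell\}$ (an edge when $\ell=2$), which is a legal Sachs component and covers $\ell$ vertices using $\ell$ arcs.

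The subtle part is the directed paths, so I would first record the counting bookkeeping: a directed cycle with $\ell$ arcs spans $\ell$ vertices, while a directed path with $m$ arcs spans $m+1$ vertices. Hence the total vertex count of the assembled Sachs subgraph equals the arc count $k$ only if each path contributes exactly $m$ of its $m+1$ vertices. For a path with $m$ \emph{even} this is easy: the alternating edges $\{i_1,i_2\},\{i_3,i_4\},\dots,\{i_{m-1},i_m\}$ form a matching of $m/2$ Sachs edge-components covering exactly $m$ of the path's vertices (leaving $i_{m+1}$ out). The genuine difficulty is a path with an \emph{odd} number of arcs, since no collection of disjoint edges and cycles living inside a path can cover an odd number of its vertices.

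The key step, and the main obstacle, is to eliminate odd paths using maximality together with symmetry. Suppose $F$ contained a path $i_1\to\cdots\to i_{m+1}$ with $m$ odd, so $m+1$ is even. Because $A$ is symmetric, every reverse arc $i_{t+1}\to i_t$ also lies in $\vec{G}$. Replacing the $m$ path arcs by the $(m+1)/2$ two-cycles $i_1\leftrightarrow i_2,\ i_3\leftrightarrow i_4,\dots,i_m\leftrightarrow i_{m+1}$ produces an arc set that still has in-degree and out-degree at most one everywhere (the change is confined to the path's own vertices, and each is now incident to exactly one two-cycle), but that has $m+1>m$ arcs on those vertices. This strictly increases $|F|$, contradicting the maximality $|F|=k=\rho_{\per}(A)$. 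Therefore a maximum $F$ contains no odd path.

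Finally I would assemble the Sachs subgraph by taking the undirected cycles and edges produced from the directed cycles, the matchings covering $m$ vertices from each (now necessarily even) path, and any loop components from fixed points. These are pairwise vertex-disjoint because the components of $F$ are, and the total number of covered vertices is $\sum_{\text{cycles}}\ell+\sum_{\text{paths}}m+(\#\text{loops})=|F|=k$. This yields a Sachs subgraph of $G$ on exactly $k$ vertices. The place I expect to spend the most care is justifying the no-odd-path claim cleanly — in particular verifying that the symmetric two-cycle replacement respects the degree constraints and is localized to the path, so that it genuinely contradicts maximality.
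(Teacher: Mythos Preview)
Your argument follows the same route as the paper's: invoke Lemma~\ref{lemma:perm-rank}, decompose the maximum arc set $F$ into directed cycles and directed paths, and convert these into undirected Sachs components. You are in fact more careful than the paper on the one subtle point --- the paper simply asserts that ``a directed path gives a set of disjoint edges by selecting alternate arcs'' and that the result has $k$ vertices, whereas you explicitly use the maximality of $F$ together with symmetry to rule out odd-length paths, which is exactly what is needed to make the vertex count come out equal to $k$.
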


\begin{proof}
Let \( \rho_{\operatorname{per}}(A)=k \).
By Lemma~\ref{lemma:principal-existence}, there exists an index set
\( S\subseteq[n] \) with \( |S|=k \) such that
\(\per(A[S,S])\neq 0\).
Hence there exists a permutation \(\sigma\) of \(S\) such that
\(a_{i,\sigma(i)}\neq 0\) for all \(i\in S\).
The subgraph of \(G\) induced by the edge set \(\{\{i,\sigma(i)\} : i\in S\}\)
(including a loop at \(i\) when \(\sigma(i)=i\)) is a vertex-disjoint union of
cycles, and hence a Sachs subgraph on \(k\) vertices.
\end{proof}

\begin{theorem}\label{thm:general-prn}
Let \( A \) be a nonnegative symmetric matrix. Then
\[
\rho_{\operatorname{per}}(A) + \eta_{\operatorname{per}}(A) = n.
\]
\end{theorem}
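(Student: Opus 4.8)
The plan is to combine the general lower bound from Theorem~\ref{prop:general-inequality} with a matching upper bound on the permanental nullity. Since that theorem already gives $\rho_{\per}(A) + \eta_{\per}(A) \ge n$, it suffices to establish $\eta_{\per}(A) \le n - k$, where $k = \rho_{\per}(A)$. Writing $\pi(A,x) = \sum_{i=0}^n b_i x^{n-i}$ as in Lemma~\ref{lemma:1}, every coefficient $b_i$ with $i > k$ vanishes, because no submatrix larger than $k \times k$ has nonzero permanent; hence $\pi(A,x) = x^{n-k}\bigl(b_0 x^k + \cdots + b_k\bigr)$, and the multiplicity of zero as a root is exactly $n-k$ precisely when $b_k \ne 0$. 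So the entire problem reduces to proving $b_k \ne 0$.

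First I would exploit nonnegativity. Since $A \ge 0$, each principal permanent $\per(A[S,S])$ is a sum of nonnegative products and is therefore itself nonnegative. Consequently $b_k = (-1)^k \sum_{|S|=k}\per(A[S,S])$ involves no cancellation among the terms, and it is nonzero as soon as a single term $\per(A[S,S])$ is strictly positive. This is the crucial structural gain over the general case of Theorem~\ref{prop:general-inequality}, where cancellation can make $b_k$ vanish even when $\rho_{\per}(A)=k$.

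The key step is then to exhibit one principal $k \times k$ submatrix with strictly positive permanent, and this is exactly what Lemma~\ref{perrank_sachsub} supplies: it guarantees a Sachs subgraph of $G$ on some $k$-vertex set $S$. I would translate that Sachs subgraph into a permutation $\sigma$ of $S$ by taking each edge $\{i,j\}$ to the transposition $(i\,j)$, for which $A(i,j)A(j,i) = A(i,j)^2 > 0$ by symmetry, and each cycle to a cyclic permutation whose corresponding product of matrix entries is strictly positive. The product $\prod_{i\in S} A(i,\sigma(i))$ is then positive, so $\per(A[S,S]) > 0$, and therefore $b_k \ne 0$, which closes the argument.

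The main obstacle is not analytic but a matter of matching the right notions: $\rho_{\per}(A) = k$ refers to arbitrary, possibly off-diagonal, submatrices, whereas $b_k$ sees only the principal ones. The whole proof hinges on Lemma~\ref{perrank_sachsub} bridging this gap, using the symmetry of $A$ to convert an off-diagonal witness of permanental rank into a principal witness realized by a Sachs subgraph. Once that bridge is in place, nonnegativity prevents cancellation and the equality follows at once.
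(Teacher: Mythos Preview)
Your proposal is correct and follows essentially the same route as the paper: both arguments reduce everything to showing $b_k \ne 0$, invoke Lemma~\ref{perrank_sachsub} to produce a Sachs subgraph on $k$ vertices, and then use nonnegativity to rule out cancellation. The only cosmetic difference is that the paper appeals to the Sachs coefficient formula to read off $b_k$ directly, whereas you spell out explicitly how the Sachs subgraph yields a permutation $\sigma$ of $S$ with $\prod_{i\in S} A(i,\sigma(i))>0$ and hence $\operatorname{per}(A[S,S])>0$; your version is arguably cleaner for a general nonnegative symmetric matrix, since Lemma~\ref{lemma:sachs-signed} is stated only for $\{0,\pm1\}$ adjacency matrices.
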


\begin{proof}
Let \(\rho_{\operatorname{per}}(A)=k\).
By Lemma~\ref{lemma:principal-existence}, there exists \(S\subseteq[n]\) with
\(|S|=k\) such that \(\per(A[S,S])\neq 0\).
Write
\(\pi(A,x)=\sum_{i=0}^n b_i x^{n-i}\).
By Lemma~\ref{lemma:coeff-principal},
\[
b_k = (-1)^k \sum_{\substack{S\subseteq[n]\\ |S|=k}} \per(A[S,S]).
\]
Since \(A\) is nonnegative, all terms in the sum are nonnegative and at least one
is positive, so \(b_k\neq 0\).
By definition of \(\rho_{\operatorname{per}}(A)=k\), we have \(b_i=0\) for all
\(i>k\).
Thus
\[
\pi(A,x)=x^{n-k}(b_0x^k+\cdots+b_k),
\]
with \(b_k\neq 0\), and hence \(0\) is a root of multiplicity \(n-k\).
Therefore \(\eta_{\operatorname{per}}(A)=n-k\), proving the claim.
\end{proof}

An alternative proof of Theorem~\ref{thm:general-prn} is given in Appendix~\ref{appendix}.

Lemma~\ref{lemma:perm-rank} gives a polynomial-time procedure for computing
\(\rho_{\per}(A)\).
Construct a bipartite graph with left vertices \(i^{-}\) and right vertices
\(i^{+}\) for \(i\in[n]\), and add an edge from \(i^{-}\) to \(j^{+}\) whenever
\(A_{ij}\neq 0\).
Then \(\rho_{\per}(A)\) equals the size of a maximum matching in this graph, which
can be computed in polynomial time~\cite{hopcroft1973n}.
By Theorem~\ref{thm:general-prn}, \(\eta_{\per}(A)=n-\rho_{\per}(A)\), and hence
\(\eta_{\per}(A)\) is also computable in polynomial time.

The example below shows that the identity 
\(
\rho_{\mathrm{per}}(A) + \eta_{\mathrm{per}}(A) = n
\)
does not always hold in general for all symmetric matrices with both positive and negative entries. 

\begin{example}
Consider the symmetric matrix
\(
B = \begin{pmatrix}
0  & 0  & 1  & -1 \\
0  & 0  & 1  & 1  \\
1  & 1  & 0  & 1  \\
-1 & 1  & 1  & 0
\end{pmatrix}
\). We find that \(
\per(B) = 0
\)
and consider the \(3 \times 3\) principal submatrix corresponding to indices \(\{2,3,4\}\), that is,
\(
B' = \begin{pmatrix}
0 & 1 & 1 \\
1 & 0 & 1 \\
1 & 1 & 0
\end{pmatrix}.
\) Then,
\(
\per(B') = 2 \neq 0.
\)
Thus,
\(
\rho_{\per}(B) = 3.
\)
For the permanental nullity, we obtain the permanental polynomial,
\(
\pi(B,x) = x^4 + 5x^2 = x^2(x^2 + 5).
\)
The root \(x = 0\) has multiplicity 2, hence
\(
\eta_{\per}(B) = 2.
\)
Therefore,
\(
\rho_{\per}(B) + \eta_{\per}(B) = 3 + 2 = 5 \neq 4 = n.
\)

\end{example}

\subsection{Adjacency Matrices of Balanced Signed Graphs}

In Section~3.2, we established that the permanental rank–nullity identity holds for $\{0,\pm1\}$-symmetric matrix \(A\) if and only if \(E_k \neq O_k\), where \(k = \rho_{\mathrm{per}}(A)\). We now show that this condition is always satisfied when \(A\) corresponds to a balanced signed graph.

 Let \( A \in \{0,1\}^{n \times n} \) be the nonnegative symmetric matrix obtained by replacing all \(-1\) entries in \( A_\sigma \) with \( 1 \). Then, the graph \( G \) with adjacency matrix \( A\) is the underlying unsigned graph of $G_\sigma$. Note that \(A\) is an adjacency matrix, so \(a_{ii}=0\) for all \(i\), and hence the associated graph \(G\) has no loops.

\begin{theorem}\label{thm:balanced-prn}
Let \( A_\sigma \in \{0, \pm 1\}^{n \times n} \) be a symmetric matrix such that the associated signed graph \( G_\sigma \) is balanced. Then  
\[
\rho_{\operatorname{per}}(A_\sigma) + \eta_{\operatorname{per}}(A_\sigma) = n.
\]
\end{theorem}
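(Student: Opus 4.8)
The plan is to reduce the statement to the criterion of Theorem~\ref{thm:general-case}: since \(A_\sigma\) is a symmetric \(\{0,\pm1\}\)-matrix with \(k = \rho_{\operatorname{per}}(A_\sigma)\), it suffices to verify \(E_k \neq O_k\), and the whole argument will consist of extracting this inequality from the balancedness of \(G_\sigma\). I would first read off the consequence of balancedness at the level of Sachs subgraphs. Because every cycle of \(G_\sigma\) is positive, every cycle occurring inside any signed Sachs subgraph \(U_i\) is positive, so \(c^-(U_i) = 0\) for all \(U_i\). Hence no Sachs subgraph contributes to the odd-part sum, giving \(O_i = 0\) for every \(i\), and in particular \(O_k = 0\). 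The criterion therefore collapses to showing \(E_k \neq 0\).

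Next I would observe that \(E_k = (-1)^k \sum_{U_k} 2^{c(U_k)}\) is, up to the global sign \((-1)^k\), a sum of strictly positive integers \(2^{c(U_k)}\), so there is no cancellation. Thus \(E_k \neq 0\) holds if and only if \(G_\sigma\) contains at least one Sachs subgraph on exactly \(k\) vertices. The remaining task is to guarantee such a subgraph at the precise value \(k = \rho_{\operatorname{per}}(A_\sigma)\), and the natural route is to transfer the question to the nonnegative matrix \(A\) obtained by replacing each \(-1\) in \(A_\sigma\) by \(1\), for which Lemma~\ref{perrank_sachsub} already supplies a Sachs subgraph on \(\rho_{\operatorname{per}}(A)\) vertices.

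To complete the transfer I would prove the bridging fact \(\rho_{\operatorname{per}}(A_\sigma) = \rho_{\operatorname{per}}(A)\). By Lemma~\ref{prop:harary-matrix}, balancedness yields \(A_\sigma = D A D\) for a diagonal sign matrix \(D = \operatorname{diag}(d_1,\dots,d_n)\) with \(d_i \in \{\pm1\}\). For any index sets \(I,J\) with \(|I| = |J|\) one has \(A_\sigma[I,J] = D[I]\,A[I,J]\,D[J]\), and expanding the permanent over permutations lets the diagonal factors be pulled out of the sum, giving \(\operatorname{per}(A_\sigma[I,J]) = \bigl(\prod_{i\in I} d_i\bigr)\bigl(\prod_{j\in J} d_j\bigr)\operatorname{per}(A[I,J])\). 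The prefactor is \(\pm1\), hence a square submatrix of \(A_\sigma\) has nonzero permanent exactly when the corresponding submatrix of \(A\) does, so the two permanental ranks coincide. Applying Lemma~\ref{perrank_sachsub} to the nonnegative matrix \(A\) then produces a Sachs subgraph of \(G\) on \(k = \rho_{\operatorname{per}}(A_\sigma)\) vertices, forcing \(E_k \neq 0 = O_k\), and Theorem~\ref{thm:general-case} delivers \(\rho_{\operatorname{per}}(A_\sigma) + \eta_{\operatorname{per}}(A_\sigma) = n\).

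The step I expect to be the main obstacle is the permanent-under-conjugation identity: one must verify that the two sign products \(\prod_{i\in I} d_i\) and \(\prod_{j\in J} d_j\) are genuinely independent of the summation permutation (they are, since a permutation merely reorders which column is paired with which row), so that they factor out as a single \(\pm1\) and cannot generate internal cancellation within the signed permanent. Once this is secured the rest is routine. I would note in passing that the same conjugation identity, applied to principal submatrices where the prefactor is \(\bigl(\prod_{i\in S} d_i\bigr)^2 = 1\), shows \(\operatorname{per}(A_\sigma[S,S]) = \operatorname{per}(A[S,S])\) for all \(S\); by Lemma~\ref{lemma:1} this makes the two permanental polynomials identical, so an alternative and even shorter route is to invoke Theorem~\ref{thm:general-prn} for \(A\) directly, transferring both \(\rho_{\operatorname{per}}\) and \(\eta_{\operatorname{per}}\) at once.
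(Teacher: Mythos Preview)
Your proposal is correct and follows essentially the same argument as the paper: use Lemma~\ref{prop:harary-matrix} and the permanent-under-diagonal-conjugation identity to show \(\rho_{\operatorname{per}}(A_\sigma)=\rho_{\operatorname{per}}(A)\), invoke Lemma~\ref{perrank_sachsub} to obtain a Sachs subgraph on \(k\) vertices (so \(E_k\neq0\)), observe that balancedness forces \(O_k=0\), and conclude via Theorem~\ref{thm:general-case}. Your closing remark that the conjugation identity on principal submatrices actually gives \(\pi(A_\sigma,x)=\pi(A,x)\), allowing a direct appeal to Theorem~\ref{thm:general-prn}, is a valid and slightly slicker alternative not taken in the paper.
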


\begin{proof}

By Lemma~\ref{prop:harary-matrix}, there exists a diagonal matrix \( D = \operatorname{diag}(\pm 1, \ldots, \pm 1) \) such that
\(
A_\sigma = D A D,
\)
where \( A \) is the adjacency matrix of the underlying unsigned graph \( G \), which is symmetric and has nonnegative entries.
Now for any index sets \( I, J \subseteq [n] \) with \( |I| = |J| \), the corresponding submatrix satisfies
\[
A_\sigma[I, J] = D[I, I] \times A[I, J] \times D[J, J],
\]
where \( D[I, I] \) and \( D[J, J] \) are diagonal matrices with entries \( \pm 1 \).

Then, the permanent satisfies
\[
\begin{aligned}
\operatorname{per}(A_\sigma[I, J]) 
&= \operatorname{per}(D[I, I] \times A[I, J] \times D[J, J]) \\
&= \left( \prod_{i \in I} D_{ii} \right) 
   \left( \prod_{j \in J} D_{jj} \right) 
   \operatorname{per}(A[I, J]).
\end{aligned}
\]

Hence, $\operatorname{per}(A_\sigma[I, J]) \ne 0$  if and only if $\operatorname{per}(A[I, J]) \ne 0$. 
Therefore,
\(
\rho_{\mathrm{per}}(A_\sigma) = \rho_{\mathrm{per}}(A).
\)

Now, let \( \rho_{\operatorname{per}}(A_\sigma) = k \), then \( \rho_{\operatorname{per}}(A) = k \). By Lemma~\ref{perrank_sachsub}, the underlying graph \( G \) contains a Sachs subgraph on \( k \) vertices, and hence \(E_k \neq 0\).
Since \(G_\sigma\) is balanced, every cycle is positive; hence \(c^-(U_k)=0\) for every
Sachs subgraph \(U_k\). Therefore \(O_k=0\).

Therefore, \( E_k \neq O_k \), and by Theorem~\ref{thm:general-case}, we conclude
\(
\rho_{\operatorname{per}}(A_\sigma) + \eta_{\operatorname{per}}(A_\sigma) = n.
\)
\end{proof}

% We say that an integer has even parity if it is divisible by 2, and odd parity otherwise. A signed graph $G_\sigma$ is said to have \textit{uniform cycle parity} if for every cycle,
% the number of negative edges in the cycle has the same parity (either always even or always odd).

% \begin{proposition}
% \label{cor:uniform-parity}
% Let \( A_\sigma \in \{0, \pm 1\}^{n \times n} \) be a symmetric matrix whose associated signed graph \( G_\sigma \) has uniform cycle parity. Then
% \[
% \rho_{\operatorname{per}}(A_\sigma) + \eta_{\operatorname{per}}(A_\sigma) = n.
% \]
% \end{proposition}

% \begin{proof}
% Let \(\rho_{\operatorname{per}}(A_\sigma)=k\). By Lemma~\ref{perrank_sachsub}, the underlying graph $G$ admits a Sachs subgraph on $k$ vertices,
% and hence at least one signed Sachs subgraph contributes a nonzero term to $E_k$ or $O_k$.
% If every cycle in \(G_\sigma\) has even parity, then \(E_k \neq0\) and \(O_k=0\).  
% While if every cycle in \(G_\sigma\) has odd parity, then \(O_k\neq0\) and \(E_k=0\).  
% In either case \(E_k \neq O_k\).  Therefore, by Theorem~\ref{thm:general-case},  
% \(
% \rho_{\operatorname{per}}(A_\sigma) + \eta_{\operatorname{per}}(A_\sigma) = n.
% \)
% \end{proof}

Since it can be tested in linear time whether a given signed graph is balanced (see \cite{HARARY1980131}), and \(\rho_{\per}(A)\) is computable via matching, both \(\rho_{\per}(A)\) and \(\eta_{\per}(A)\) are polynomial-time computable for balanced signed graphs.

\subsection{Positive Semidefinite Matrices}

 This class of matrices has a well-known structure, and we use some known properties of their permanents to carry out the proof.
 
\begin{lemma}\label{lemma:psd-principal-submatrix}
Let \( A \in \mathbb{R}^{n \times n} \) be positive semidefinite, and suppose
\(\rho_{\operatorname{per}}(A) = k\).
Then any \(k\times k\) submatrix \(A[I,J]\) of \(A\) with
\(\operatorname{per}(A[I,J]) \neq 0\) must be a principal submatrix, that is,
\(I = J\).
\end{lemma}

\begin{proof}
By Theorem~\ref{prop:general-inequality} for any square matrix \( A \) of order $n$,
\[
\rho_{\operatorname{per}}(A) + \eta_{\operatorname{per}}(A) \ge n.
\]
Let \( \rho_{\operatorname{per}}(A) = k \), and let \( A[I, J] \) be any
$k\times k$ submatrix of \( A \) such that
\(
\operatorname{per}(A[I, J]) \neq 0.
\) Then for each  \(i \in I\) there exists a  \(j \in J \)  such that  \(a_{ij} \neq 0\).

By property (2) of Lemma~\ref{theorem:psd-properties}, for such \( i \) and \( j \), we have \( a_{ii} \neq 0 \) and \( a_{jj} \neq 0 \). Hence, the submatrix \( A[I \cup J, I \cup J] \) has all diagonal entries nonzero. Then, by property (3) of Lemma \ref{theorem:psd-properties},
\(
\operatorname{per}(A[I \cup J, I \cup J]) \neq 0.
\)

If \( I \neq J \), then \( |I \cup J| > k \), contradicting  \( \rho_{\operatorname{per}}(A) = k \). Therefore, we must have \( I = J \), and hence any $k\times k$ submatrix of \(A\)
with non-zero permanent is necessarily a principal submatrix.
This completes the proof.
\end{proof}

\begin{theorem}\label{thm:psd-prn}
Let \( A \in \mathbb{R}^{n \times n} \) be positive semidefinite.  Then
\[
\rho_{\operatorname{per}}(A) \;+\; \eta_{\operatorname{per}}(A) \;=\; n.
\]
\end{theorem}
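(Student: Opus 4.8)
The plan is to upgrade the general inequality of Theorem~\ref{prop:general-inequality} to an equality by establishing the reverse direction $\eta_{\per}(A) \le n - k$, where $k = \rho_{\per}(A)$. As in the proof of Theorem~\ref{thm:general-prn}, writing the permanental polynomial as $\pi(A,x) = \sum_{i=0}^k b_i x^{n-i} = x^{n-k}\bigl(b_0 x^k + \cdots + b_k\bigr)$, the multiplicity of the zero root equals exactly $n-k$ precisely when the coefficient $b_k$ of the bracketed factor is nonzero. So the entire argument reduces to proving $b_k \neq 0$.

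First I would invoke Lemma~\ref{lemma:1} to write $b_k = (-1)^k \sum_{|S| = k} \per(A[S,S])$, the sum running over all principal $k \times k$ submatrices. The decisive structural fact, supplied by property (4) of Lemma~\ref{theorem:psd-properties}, is that every principal submatrix of a positive semi-definite matrix is again positive semi-definite, so $\per(A[S,S]) \ge 0$ for every $S$. Consequently all terms in the sum share the same sign and no cancellation can occur; this is exactly the feature that fails for general symmetric $\{0,\pm1\}$-matrices and is the reason the identity can break down there.

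Next I would supply one strictly positive term. By Lemma~\ref{lemma:psd-principal-submatrix}, the permanental rank of $A$ is realized by a \emph{principal} submatrix: there is an $S \subseteq [n]$ with $|S| = k$ and $\per(A[S,S]) \neq 0$, hence $\per(A[S,S]) > 0$. Combined with the nonnegativity of every summand, this forces $\sum_{|S|=k}\per(A[S,S]) > 0$, so $b_k \neq 0$. The factorization above then yields $\eta_{\per}(A) = n - k$, and adding $\rho_{\per}(A) = k$ gives the claimed identity.

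The genuine obstacle in this chain is not the final combinatorial step but the guarantee that the rank is attained on a principal submatrix, which is precisely the content of Lemma~\ref{lemma:psd-principal-submatrix}; there positive semi-definiteness is used twice, through property (2) to show that a nonzero off-diagonal entry forces both corresponding diagonal entries to be nonzero, and through property (3) to conclude that a principal submatrix with nonzero diagonal has positive permanent. Once that lemma is in hand, the nonnegativity of principal permanents does all the remaining work automatically, so no delicate estimate is needed to close the gap between the inequality and the equality.
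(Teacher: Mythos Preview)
Your proposal is correct and follows essentially the same route as the paper: invoke Lemma~\ref{lemma:psd-principal-submatrix} to realize $\rho_{\per}(A)=k$ by a principal submatrix, then use property~(4) of Lemma~\ref{theorem:psd-properties} (nonnegativity of all principal permanents) to conclude $b_k\neq 0$ and hence $\eta_{\per}(A)=n-k$. The only difference is that you spell out explicitly, via Lemma~\ref{lemma:1}, why nonnegativity plus one positive summand forces $b_k\neq 0$, whereas the paper compresses this step into a single clause.
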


\begin{proof}
By Lemma \ref{lemma:psd-principal-submatrix}, there exists a principal submatrix $A[S,S]$ of order $k = \rho_{\operatorname{per}}(A)$ with $\operatorname{per}(A[S,S]) \neq 0$. So the permanental polynomial of $A$ is given by, 
\(\pi(A,x) = x^{n-k} \left( b_0 x^k + \cdots + b_{k} \right), \) with \( b_{k} \neq 0\) by property (4) of Lemma \ref{theorem:psd-properties}. Thus, the multiplicity of $0$ as a root is exactly $n - k$, implying  $\eta_{\operatorname{per}}(A) = n - k$.

Hence, $
\eta_{\per}(A)+\rho_{\per}(A)=n.
$ \end{proof}

This result extends naturally to positive semidefinite Hermitian matrices,
since the arguments in Lemmas~\ref{lemma:psd-principal-submatrix}
and Theorem~\ref{thm:psd-prn} rely only on matrix symmetry, which is preserved under
conjugate transposition.

Unlike previous classes, no polynomial-time algorithm is known for computing \(\rho_{\operatorname{per}}(A)\) or \( \eta_{\operatorname{per}}(A)\) for general PSD matrices.

\section{Future Directions}\label{futuredirections}
In this paper, we proved that the permanental rank–nullity identity
\[
\rho_{\mathrm{per}}(A) + \eta_{\mathrm{per}}(A) = n
\]
holds for nonnegative symmetric matrices, positive semidefinite matrices, and adjacency matrices of balanced signed graphs. More generally, we showed that the identity holds for any $\{0,\pm1\}$-symmetric matrix \(A\) if and only if \(E_k \neq O_k\), where \(k = \rho_{\mathrm{per}}(A)\). 
We can extend the condition to any real matrix by letting each entry \(A_{ij}\) be the weight on the directed edge \(i\to j\), and then comparing the total weights of positive and negative weighted cycles in directed cycle covers on \(k\) vertices.
 Although this condition extends to all real matrices, it only becomes more meaningful if we can relate the condition
\(
E_k \neq O_k
\)
to matrix properties, such as rank, eigenvalue spectrum, permanental spectrum, or other structural patterns.
Can the permanental rank and nullity be used to derive structural results about matrices or graphs, and do they interact with other invariants in interesting ways? Developing such results may lead to a broader theory parallel to linear algebra.

On the algorithmic side, because computing the permanent is \#P-complete, it is important to ask what is the computational complexity of computing \(\rho_{\mathrm{per}}(A)\) and  \(\eta_{\mathrm{per}}(A)\) for arbitrary real symmetric matrices? Is it NP-hard? For positive semidefinite matrices, we established the rank-nullity identity but did not provide an efficient algorithm. Is \(\rho_{\mathrm{per}}(A)\) and  \(\eta_{\mathrm{per}}(A)\) polynomial-time computable for PSD matrices? If not, what is its approximation complexity? 
Since Theorem~\ref{thm:general-case} involves counting directed cycle covers,
can the problem of deciding whether \(E_k \neq O_k\) be placed within the
polynomial hierarchy?

We believe these directions will lead to a more complete theory of the permanent that complements its well-studied algebraic properties, bridging matrix theory, computational complexity, and graph algorithms.

\appendix
\section{Alternative proof of Theorem~\ref{thm:general-prn}} \label{appendix}

\begin{lemma}\label{lemma:app-nonempty-intersection}
Let $A$ be a non-negative symmetric matrix with $\rho_{\operatorname{per}}(A) = k$. For any $k \times k$ submatrix $A[I,J]$ with $\operatorname{per}(A[I,J]) \neq 0$, we have $I \cap J \neq \emptyset$.
\end{lemma}
\begin{proof}
Let us assume that \( I \cap J = \emptyset \) and that the submatrix \( A[I, J] \) satisfies
\[
\per(A[I, J]) \neq 0.
\]
Because \( A \) is symmetric, we can define another submatrix.
\[
M' = \begin{bmatrix} X & M \\ M^T & Y \end{bmatrix},
\]
where 
\(
M = A[I, J], X = A[I, I], Y = A[J, J].
\)

 Since \( A \) is non-negative, every entry in \( A \) (and thus in \( M, X, Y \)) is non-negative. Moreover, the symmetry of \( A \) implies that \( M^T = A[J, I] \). Thus \(
\per(M') \geq \per(M) \, \per(M^T)
\) (see \cite{minc1984permanents}).
Also,
\(\per(M) = \per(M^T)\) \& \( \per(M) \neq 0. \) Hence,
\(\per(M') \neq 0.\) However, \( M' \) is a submatrix of \( A \) of order \( |I| + |J| = 2k > k \), which contradicts the assumption that \( \rho_{\per}(A) = k \). Thus, the assumption \( I \cap J = \emptyset \) must be false, implying
\(
I \cap J \neq \emptyset.
\) 
\end{proof}

\begin{lemma}\label{lemma:app-bijection-constraint}
  Let \( A \) be a non-negative symmetric matrix with \( \rho_{\per}(A) = k \). 
  Then, any permutation $\sigma$ corresponding to which a submatrix $A[I, J]$ of order $k$ has non-zero permanent, must satisfy the following condition:
    \[ \sigma(i) \notin I \implies i \in J\]
\end{lemma}

\begin{proof}
Since \( \per(A[I, J]) \neq 0 \) and \( A \) is non-negative, there exists a bijection 
$
\sigma: I \to J
$
such that
$
A(i, \sigma(i)) \neq 0 \quad \text{for all } i \in I.
$

By Lemma \ref{lemma:app-nonempty-intersection} we know \( I \cap J \neq \phi \). Now, suppose there exists some \( i \in I \) such that
$
\sigma(i) \notin I \quad \text{and} \quad i \notin J.
$
 Since \( A(i, \sigma(i)) \neq 0 \) and $A$ is symmetric, \( A(\sigma(i),i) \neq 0 \). Then the extended submatrix
$
A[I \cup \{\sigma(i)\}, J \cup \{i\}],
$
which is of order \( k+1 \) will also have nonzero permanent, contradicting the definition of \( \rho_{\per}(A) = k \).
Thus, we must have
$
\sigma(i) \notin I \implies i \in  J.$
\end{proof}

\begin{lemma}\label{lemma:app-principal-existence}
Let \( A \) be a non-negative symmetric matrix of order \( n \) with \( \rho_{\per}(A) = k \). Then, there exists a principal submatrix \( A[S, S] \) of order \( k \) such that
\[
\per(A[S, S]) \neq 0.
\]
\end{lemma}

\begin{proof}

Since $\rho_{\per}(A)=k$, so there exists a submatrix \( A[I, J] \) (with \( I, J \subseteq \{1, 2, \dots, n\} \) and \( |I| = |J| = k \)) such that
\(
\per(A[I, J]) \neq 0.
\)

If $I=J$, then the proof is complete. So, we will consider the case when $I \neq J$. Now, as $\per(A[I, J]) \neq 0$, by definition of permanent there exists a bijection $\sigma \in S_n$ such that for each $i \in I$, there exists a $ \sigma(i) \in J$ and 
$ A(i,\sigma(i)) \neq 0. $

Now, we show that the principal submatrix \( A[I, I] \) has a nonzero permanent. Define a permutation \( \pi: I\to I \) as follows.
\[
\pi(i) = 
\begin{cases} 
\sigma(i), & \text{if } \sigma(i) \in I, \\[1mm]
\sigma^{-1}(i), & \text{if } \sigma(i) \notin I.
\end{cases}
\]
Since, by Lemma \ref{lemma:app-bijection-constraint} if \(\sigma(i) \notin I \implies i \in J \), \( \pi(i) \) is well-defined for all \( i \in I\). \\
Now we verify two claims about \( \pi \):

\textbf{Claim 1: }  \( \pi \) is a bijection on \( I \).

We consider three cases for \( i_1, i_2 \in I \):

\begin{enumerate}
    \item  If \( \sigma(i_1), \sigma(i_2) \in I \), then \( \pi(i_1) = \sigma(i_1) \) and \( \pi(i_2) = \sigma(i_2) \). Since \( \sigma \) is injective, \( \pi(i_1) = \pi(i_2) \) implies \( i_1 = i_2 \).
    
    \item  If \( \sigma(i_1), \sigma(i_2) \notin I \), then \( \pi(i_1) = \sigma^{-1}(i_1) \) and \( \pi(i_2) = \sigma^{-1}(i_2) \). Since \( \sigma^{-1} \) is injective, again \( \pi(i_1) = \pi(i_2) \) implies \( i_1 = i_2 \).
    
    \item If \( \sigma(i_1) \in I \) but \( \sigma(i_2) \notin I \), then \( \pi(i_1) = \sigma(i_1) \) and \( \pi(i_2) = \sigma^{-1}(i_2) \). Suppose \( \pi(i_1) = \pi(i_2) = c \). Then \( \sigma(i_1) = c \) and \( \sigma^{-1}(i_2) = c \), so \( \sigma(c) = i_2 \). By the defining property of \( \sigma \), we have \( A(i_1, \sigma(i_1)) = A(i_1, c) \neq 0 \). Since \( A \) is symmetric, \( A(i_2, c) = A(c, i_2) = A(i_2, \sigma(i_1)) \neq 0 \). This contradicts the bijectivity of \( \sigma \) unless \( i_1 = i_2 \).
\end{enumerate}
Thus, \( \pi \) is injective (and hence bijective on \( I \)).

\textbf{Claim 2:} For each \( i \in I \), \( A(i,\pi(i)) \neq 0 .\)  

There are two cases to consider:
\begin{enumerate}
    \item If \( \sigma(i) \in I \) then \( \pi(i) = \sigma(i) \), and by definition of \( \sigma \), we have
    \(
    A(i, \pi(i)) = A(i, \sigma(i)) \neq 0.
    \)
    
    \item If \( \sigma(i) \notin I \), then by Lemma \ref{lemma:app-bijection-constraint}, it follows that \( i \in J \). Thus, there exists a unique \( i' \in I \) such that \( \sigma(i') = i \) that is, \( i' = \sigma^{-1}(i) \). By the definition of \( \sigma \),
    \(
    A(i', \sigma(i)) = A(i', i) \neq 0.
    \)
    Since \( A \) is symmetric, we also have \( A(i, i') = A(i, \sigma^{-1}(i)) \neq 0 \). In this case, \( \pi(i) = \sigma^{-1}(i) \), so once again \( A(i, \pi(i)) \neq 0 \).
\end{enumerate}

Thus, for every \( i \in I \), \( A(i,\pi(i))\neq 0 \). It follows that
\(
\prod_{i \in I} A(i,\pi(i)) \neq 0,
\)
and hence,
\(
\per(A[I, I]) \neq 0.
\)
This shows that \( A[I,I] \) is a principal submatrix of order \( k \) with nonzero permanent, which completes the proof. \\
\end{proof}

\begin{proof}[Proof of Theorem~\ref{thm:general-prn}]
Assume \(\rho_{\per}(A)=k\). By Lemma~\ref{lemma:app-principal-existence}, there exists
\(S\subseteq[n]\) with \(|S|=k\) such that \(\per(A[S,S])\neq 0\).
Write \(\pi(A,x)=\sum_{i=0}^n b_i x^{n-i}\).
By Lemma~\ref{lemma:coeff-principal},
\[
b_k = (-1)^k\sum_{|T|=k}\per(A[T,T])\neq 0.
\]
Since \(\rho_{\per}(A)=k\), we have \(b_i=0\) for all \(i>k\), hence
\[
\pi(A,x)=x^{n-k}(b_0x^k+\cdots+b_k),
\]
with \(b_k\neq 0\). Therefore \(0\) has multiplicity \(n-k\), i.e.,
\(\eta_{\per}(A)=n-k\), and \(\rho_{\per}(A)+\eta_{\per}(A)=n\).
\end{proof}

\bibliographystyle{plainurl}
\bibliography{references}

@article{valiant1979complexity,
  title={The complexity of computing the permanent},
  author={Valiant, Leslie G},
  journal={Theoretical computer science},
  volume={8},
  number={2},
  pages={189--201},
  year={1979},
  publisher={Elsevier},
  url = {https://doi.org/10.1016/0304-3975(79)90044-6}
}

@article{kaltofen2005complexity,
  author  = {Kaltofen, Erich and Villard, Gilles},
  title   = {On the complexity of computing determinants},
  journal = {Computational Complexity},
  volume  = {13},
  number  = {3},
  pages   = {91--130},
  year    = {2005},
  issn    = {1420-8954},
  url     = {https://doi.org/10.1007/s00037-004-0185-3},
}

@article{turner1968generalized,
  title={Generalized matrix functions and the graph isomorphism problem},
  author={Turner, James},
  journal={SIAM Journal on Applied Mathematics},
  volume={16},
  number={3},
  pages={520--526},
  year={1968},
  publisher={SIAM},
  url={https://doi.org/10.1137/0116041}
}

@article{merris1981permanental,
  title={Permanental polynomials of graphs},
  author={Merris, Russell and Rebman, Kenneth R and Watkins, William},
  journal={Linear Algebra and Its Applications},
  volume={38},
  pages={273--288},
  year={1981},
  publisher={Elsevier},
  url ={https://doi.org/10.1016/0024-3795(81)90026-4}
}

@article{kasum1981chemical,
  title={Chemical graph theory. III. On the permanental polynomial},
  author={Kasum, D and Trinajsti{\'c}, N and Gutman, I},
  journal={Croatica Chemica Acta},
  volume={54},
  number={3},
  pages={321--328},
  year={1981},
  publisher={Hrvatsko kemijsko dru{\v{s}}tvo},
  url = {https://hrcak.srce.hr/194320}
}

@article{cash2000permanental,
  title={The permanental polynomial},
  author={Cash, Gordon G},
  journal={Journal of Chemical Information and Computer Sciences},
  volume={40},
  number={5},
  pages={1203--1206},
  year={2000},
  publisher={ACS Publications},
  url={https://doi.org/10.1021/ci000031d}
}

@article{gutman1998permanents,
  title={Permanents of adjacency matrices and their dependence on molecular structure},
  author={Gutman, Ivan},
  journal={Polycyclic Aromatic Compounds},
  volume={12},
  number={4},
  pages={281--287},
  year={1998},
  publisher={Taylor \& Francis},
  url={https://doi.org/10.1080/10406639808233845}
}

@article{liu2013characterizing,
  title={On the characterizing properties of the permanental polynomials of graphs},
  author={Liu, Shunyi and Zhang, Heping},
  journal={Linear Algebra and its Applications},
  volume={438},
  number={1},
  pages={157--172},
  year={2013},
  publisher={Elsevier},
  url ={http://dx.doi.org/10.1016/j.laa.2012.08.026}
}

@article{zhang2015per,
  title={Per-spectral characterizations of some edge-deleted subgraphs of a complete graph},
  author={Zhang, Heping and Wu, Tingzeng and Lai, Hong-Jian},
  journal={Linear and Multilinear Algebra},
  volume={63},
  number={2},
  pages={397--410},
  year={2015},
  publisher={Taylor \& Francis},
  url={http://dx.doi.org/10.1080/03081087.2013.869592}
}

@article{wu2015per,
  title={Per-spectral characterizations of graphs with extremal per-nullity},
  author={Wu, Tingzeng and Zhang, Heping},
  journal={Linear Algebra and its Applications},
  volume={484},
  pages={13--26},
  year={2015},
  publisher={Elsevier},
  url = {http://dx.doi.org/10.1016/j.laa.2015.06.018}
}

@article{yu1999permanent,
  title={The permanent rank of a matrix},
  author={Yu, Yang},
  journal={J. Comb. Theory, Ser. A},
  volume={85},
  number={2},
  pages={237--242},
  year={1999},
  url={https://doi.org/10.1006/jcta.1998.2904}
}

@article{alon1989nowhere,
  author       = {Noga Alon and
                  Michael Tarsi},
  title        = {A nowhere-zero point in liner mappings},
  journal      = {Combinatorica},
  volume       = {9},
  number       = {4},
  pages        = {393--396},
  year         = {1989},
  url          = {https://doi.org/10.1007/BF02125351},
}

@article{fanai2012permanent,
  title={Permanent rank and transversals},
  author={Fana{\i}, Hamid-Reza},
  journal={Australasian Journal of Combinatorics},
  volume={53},
  pages={285--288},
  year={2012}
}

@book{minc1984permanents, place={Cambridge}, series={Encyclopedia of Mathematics and its Applications}, title={Permanents}, publisher={Cambridge University Press}, author={Minc, Henryk}, year={1984}, collection={Encyclopedia of Mathematics and its Applications}}

@article{mccuaig2004polya,
  title={P{\'o}lya's permanent problem},
  author={McCuaig, William},
  journal={The Electronic Journal of Combinatorics},
  pages={R79},
  year={2004},
  url = {https://doi.org/10.37236/1832}
}

@article{polya1913aufgabe,
  author    = {P{\'o}lya, George},
  title     = {Aufgabe 424},
  journal   = {Archiv der Mathematik und Physik},
  volume    = {20},
  pages     = {271},
  year      = {1913}
}

@article{beasley1969maximal,
  title={Maximal Groups on Which the Permanent is Multiplicative},
  author={LeRoy B. Beasley},
  journal={Canadian Journal of Mathematics},
  year={1969},
  volume={21},
  pages={495 - 497},
  url={https://doi.org/10.4153/CJM-1969-055-2}
}

@article{carlen2006inequality,
  title={An Inequality of Hadamard Type for Permanents},
  author={Eric A. Carlen and Elliott H. Lieb and Michael Loss},
  journal={Methods and applications of analysis},
  year={2005},
  volume={13},
  pages={1-18},
  url={https://doi.org/10.4310/maa.2006.v13.n1.a1}
}

@article{heuvers1988characterization,
title = {A characterization of the permanent function by the Binet-Cauchy theorem},
journal = {Linear Algebra and its Applications},
volume = {101},
pages = {49-72},
year = {1988},
issn = {0024-3795},
url = {https://doi.org/10.1016/0024-3795(88)90142-5},
author = {Konrad J. Heuvers and L.J. Cummings and K.P.S. {Bhaskara Rao}}
}

@Inbook{lieb2002proofs,
author="Lieb, Elliott H.",
title="Proofs of some Conjectures on Permanents",
bookTitle="Inequalities: Selecta of Elliott H. Lieb",
year="2002",
publisher="Springer Berlin Heidelberg",
address="Berlin, Heidelberg",
pages="101--108",
isbn="978-3-642-55925-9",
url="https://doi.org/10.1007/978-3-642-55925-9_11"
}

@article{marcus1965permanents,
author = {Marvin Marcus and Henryk Minc},
title = {Permanents},
journal = {The American Mathematical Monthly},
volume = {72},
number = {6},
pages = {577--591},
year = {1965},
publisher = {Taylor \& Francis},
URL = {https://doi.org/10.1080/00029890.1965.11970575}
}

@article{Marcus1963ThePA,
  title={The permanent analogue of the Hadamard determinant theorem},
  author={Marvin Marcus},
  journal={Bulletin of the American Mathematical Society},
  year={1963},
  volume={69},
  pages={494-496},
  url={https://doi.org/10.1090/S0002-9904-1963-10975-1}
}

@article{harary1953notion,
  title={On the notion of balance of a signed graph.},
  author={Frank Harary},
  journal={Michigan Mathematical Journal},
  year={1953},
  volume={2},
  pages={143-146},
  url={https://doi.org/10.1307/MMJ%2F1028989917}
}

@article{HARARY1980131,
  title = {A simple algorithm to detect balance in signed graphs},
  author = {Harary, Frank and Kabell, Jerald A.},
  journal = {Mathematical Social Sciences},
  volume = {1},
  number = {1},
  pages = {131--136},
  year = {1980},
url={https://doi.org/10.1016/0165-4896(80)90010-4}
}

@article{zaslavsky1982signed,
title = {Signed graphs},
journal = {Discrete Applied Mathematics},
volume = {4},
number = {1},
pages = {47-74},
year = {1982},
issn = {0166-218X},
url = {https://doi.org/10.1016/0166-218X(82)90033-6},
author = {Thomas Zaslavsky}
}

@ARTICLE{zaslavsky2013matrices,
       author = {{Zaslavsky}, Thomas},
        title = "{Matrices in the Theory of Signed Simple Graphs}",
      journal = {arXiv e-prints},
         year = 2013,
        month = mar,
          eid = {arXiv:1303.3083},
        pages = {arXiv:1303.3083},
          url = {https://doi.org/10.48550/arXiv.1303.3083},
}

@article{tang2022permanental,
author = {Tang, Zikai and Li, Qiyue and Deng, Hanyuan},
year = {2022},
month = {02},
pages = {14–20},
title = {On the permanental polynomial and permanental sum of signed graphs},
volume = {10},
journal = {Discrete Mathematics Letters},
url = {https://doi.org/10.47443/dml.2022.005}
}

@article{singh2024note,
  title={A note on graphs with purely imaginary per-spectrum},
  author={Singh, Ranveer and Wankhede, Hitesh},
  journal={Applied Mathematics and Computation},
  volume={475},
  pages={128754},
  year={2024},
  publisher={Elsevier},
url ={https://doi.org/10.48550/arXiv.2211.13072}
}

@article{bapat2024computing,
  title={Computing the permanental polynomial of 4k-intercyclic bipartite graphs},
  author={Ravindra B. Bapat and Ranveer Singh and Hitesh Wankhede},
  journal={ArXiv},
  year={2024},
  volume={abs/2411.14238},
  url={https://doi.org/10.48550/arXiv.2411.14238}
}

@book{BrualdiCvetkovic2008,
  author    = {Richard A. Brualdi and Drago{\v{s}} Cvetkovi{\'c}},
  title     = {A Combinatorial Approach to Matrix Theory and Its Applications},
  publisher = {Chapman \& Hall/CRC},
  year      = {2008},
    url = {https://doi.org/10.1201/9781420082241}
}

@article{hopcroft1973n,
  author  = {Hopcroft, John E. and Karp, Richard M.},
  title   = {An $n^{5/2}$ Algorithm for Maximum Matchings in Bipartite Graphs},
  journal = {SIAM Journal on Computing},
  volume  = {2},
  number  = {4},
  pages   = {225--231},
  year    = {1973},
url = {https://doi.org/10.1109/SWAT.1971.1}
}

\end{document}